\documentclass[draft]{article}

\usepackage{amssymb}
\usepackage{amsmath}
\usepackage{indentfirst}

\textwidth=145truemm \textheight=235truemm

\baselineskip10pt \topmargin=-10mm \oddsidemargin=0cm

\newtheorem{theorem}{Theorem}[section]

\newtheorem{corollary}[theorem]{Corollary}
\newtheorem{lemma}[theorem]{Lemma}
\newenvironment{proof}[1][Proof]{\textbf{#1.}}{\ \rule{0.5em}{0.5em}}

\begin{document}	
\title{A New Method of Construction of Permutation Trinomials with Coefficients 1}
\author{Hua Guo$^{1,2}$, Shuo Wang$^3$, Hutao Song$^{1,4}$, Xiyong Zhang$^{5,*}$, Jianwei Liu$^1$ \\
{\scriptsize		
\begin{tabular}{ll}	
1 & School of Cyber Science and Technology, Beihang University, Beijing 100191, China \\
2 & State Key Laboratory of Cryptology, P.O. Box 5159, Beijing 100878, China \\
3 & School of Electronic and Information Engineering, Beihang University, Beijing 100191, China \\
4 & State Key Laboratory of Software Development Environment, Beihang University, Beijing 100191, China\\
5 & Beijing Institute of Satellite Information Engineering, Beijing 100086, China \\
\end{tabular}
}}
\date{}	
\maketitle
\vspace{-0.4cm}	
\let\thefootnote\relax
\footnote{*Corresponding author}
\footnote{Email addresses:}
\footnote{ hguo@buaa.edu.cn (Hua Guo)}
\footnote{nicolas\_wangshuo@buaa.edu.cn (Shuo Wang)}
\footnote{htsong@buaa.edu.cn (Hutao Song)}
\footnote{Xiyong.Zhang@hotmail.com (Xiyong Zhang)}
\footnote{liujianwei@buaa.edu.cn (Jianwei Liu)}
\begin{abstract}
Permutation polynomials over finite fields are an interesting and constantly active research subject of study for many years. They have important applications in areas of mathematics and engineering. In recent years, permutation binomials and permutation trinomials attract people's interests due to their simple algebraic forms. In this paper, by reversely using Tu's method for the characterization of permutation polynomials with exponents of Niho type, we propose a new method to construct permutation trinomials with coefficients 1. Moreover, we give the explicit compositional inverses of a class of permutation trinomials for a special case.
\end{abstract}

\textbf{Keywords:}
{\textit{Finite field, Permutation trinomial, Fractional reciprocal polynomial, Polar decomposition, Exponential sum}
	
\section{Introduction}
Let $ p $ be a prime number, $ n $ be a positive integer, $ q=p^{n} $, $ \mathbb{F}_{q} $ be a finite field with $ q $ elements, and $\mathbb{F}_q^\times$ be its multiplicative group. Let  $ f(x) $ be a polynomial over $ \mathbb{F}_{q} $. If $ f : \alpha \mapsto f(\alpha) $ is a permutation of $ \mathbb{F}_{q} $, $ f(x) $ is called a permutation polynomial (abbreviated as PP) of $ \mathbb{F}_{q} $. The algebraic closure of $\mathbb{F}_q$ is denoted by $\overline{\mathbb{F}}_q$. For integer $s,d>0$ such that $sd=q-1$, let $\mu_d=\{x\in \overline{\mathbb{F}}_q:x^d=1\}$ denote the unique cyclic subgroup of $\mathbb{F}_q^\times$ of order $d$. Especially when $q=2^{2m}$ for some positive integer $m$ and $d=2^m+1$, $\mu_d$ is written as $U$ in this paper. Permutation polynomials have wide applications in various areas of mathematics and engineering, such as coding theory \cite{ref01}, cryptography \cite{ref03,ref02} and combinatorial designs \cite{ref04}. Permutation binomials and trinomials with coefficients 1 have the simplest algebraic form. These types of PPs attract people's interests in recent years due to their important applications in cryptography (such as multivariate public key cryptography), integer sequences, and finite geometry, etc.
	
In general by Lagrange interpolation it is not difficult to construct a random PP for a given finite field. However it is difficult to find PPs with simple or nice algebraic appearance. Only a few classes of permutation binomials and trinomials are known by now. In 2009, Akbary, Ghioca and Wang proposed a useful criterion referred as AGW criterion to decide whether a map is a permutation function over a finite set  in \cite{ref1}. In fact it is a generalization of numerous earlier results, one of which is the following theorem that has been used in many previous constructions of PPs.
	
\begin{theorem}\label{Th1.1}\cite{ref2}
Let $ q $ be a prime power number, $r,d $ be positive integers, $ d \mid (q-1) $, $ h \in \mathbb{F}_{q}[x] $, then  $ f(x): = x^{r}h(x^{(q-1)/d}) $ is a permutation polynomial over $ \mathbb{F}_{q} $ if and only if
		
(1) $\gcd(r,(q-1)/d) = 1 $, and
		
(2) $x^{r}h(x)^{(q-1)/d} $ is a permutation polynomial over $ \mu_{d} $.
\end{theorem}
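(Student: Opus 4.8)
The plan is to descend $f$ along the power map to the subgroup $\mu_d$ and turn the problem into a statement about the finite group $\mu_d$; this is exactly the AGW-type fibre argument in the multiplicative setting. Write $s=(q-1)/d$, so that $x^{(q-1)/d}=x^s$ and $\mu_s=\{x\in\mathbb{F}_q^\times:x^s=1\}$ is the subgroup of order $s$. Since $r$ is a positive integer we have $f(0)=0$, hence $f$ permutes $\mathbb{F}_q$ if and only if $f$ restricts to a bijection of $\mathbb{F}_q^\times$ and $f(x)\neq 0$ for every $x\neq 0$, i.e. $h(x^s)\neq 0$. First I would record the surjective group homomorphism $\pi\colon\mathbb{F}_q^\times\to\mu_d$, $\pi(x)=x^s$, whose kernel is precisely $\mu_s$, so that every fibre of $\pi$ is a coset of $\mu_s$ with exactly $s$ elements. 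Writing $\bar f(y)=y^rh(y)^s$ for the map appearing in condition $(2)$ (note $y^r\in\mu_d$, and $h(y)^s\in\mu_d$ whenever $h(y)\neq 0$, so $\bar f$ does land in $\mu_d$ once $h$ is zero-free on $\mu_d$), the one computation that drives everything is the commuting relation $\pi(f(x))=\bar f(\pi(x))$, valid for all $x\in\mathbb{F}_q^\times$ with $h(x^s)\neq 0$.

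For the ``if'' direction, assume $(1)$ and $(2)$. Since $\bar f$ is a bijection of $\mu_d$ by $(2)$, it never takes the value $0$, so $h$ is zero-free on $\mu_d$ and hence $h(x^s)\neq 0$ for all $x\in\mathbb{F}_q^\times$; thus $f$ maps $\mathbb{F}_q^\times$ into itself and it remains to prove injectivity there. If $f(x_1)=f(x_2)$, apply $\pi$ and the commuting relation to get $\bar f(\pi(x_1))=\bar f(\pi(x_2))$, so $\pi(x_1)=\pi(x_2)$ by $(2)$, which means $x_2=x_1t$ with $t\in\mu_s$. Substituting back and using $t^s=1$, the equation $f(x_1)=f(x_1t)$ simplifies (after cancelling $x_1^r$ and the nonzero $h(x_1^s)$) to $t^r=1$; then $t^{\gcd(r,s)}=1$, and $\gcd(r,s)=1$ by $(1)$ forces $t=1$, so $x_1=x_2$. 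Hence $f$ is a bijection of $\mathbb{F}_q^\times$, which together with $f(0)=0$ makes $f$ a permutation of $\mathbb{F}_q$.

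For the ``only if'' direction, assume $f$ is a permutation of $\mathbb{F}_q$. From $f(0)=0$ it follows that $f$ restricts to a bijection of $\mathbb{F}_q^\times$ and is nonzero there, so (using surjectivity of $\pi$) $h$ is zero-free on $\mu_d$ and $\bar f$ really is a self-map of $\mu_d$. To obtain $(2)$ I would just show $\bar f$ is onto $\mu_d$, hence a bijection of the finite set $\mu_d$: given $z\in\mu_d$, pick $w$ with $\pi(w)=z$, then $x$ with $f(x)=w$, and read off $\bar f(\pi(x))=\pi(f(x))=\pi(w)=z$ from the commuting relation. To obtain $(1)$, suppose $g:=\gcd(r,s)>1$ and choose $t$ of order $g$ inside the cyclic group $\mu_s$; then for every $x\in\mathbb{F}_q^\times$ one has $f(xt)=(xt)^rh((xt)^s)=x^rt^rh(x^s)=f(x)$ since $t^r=t^s=1$, while $xt\neq x$, contradicting injectivity. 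Therefore $\gcd(r,s)=1$.

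I do not expect a genuine obstacle: the proof is the descent of $f$ along $\pi$ together with bookkeeping in the commutative square, and once that square is in place each implication is a few lines. The only points needing care are the preliminary reductions in the ``only if'' direction for $(2)$ — one must first extract from ``$f$ is a permutation of $\mathbb{F}_q$'' that $f$ preserves $\mathbb{F}_q^\times$ and that $h$ does not vanish on $\mu_d$, before $\bar f$ can even be spoken of as a map $\mu_d\to\mu_d$ — and the degenerate case $d=1$, where $\mu_d$ is trivial and $(2)$ is vacuous, which should either be checked directly or seen to be absorbed by the general argument.
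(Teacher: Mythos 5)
Your argument is correct: the commuting relation $\pi(f(x))=\bar f(\pi(x))$ with $\pi(x)=x^{(q-1)/d}$, the fibre/coset analysis over $\mu_{(q-1)/d}$, and the two directions are all carried out soundly, including the preliminary reductions showing $h$ is zero-free on $\mu_d$. The paper itself states this theorem without proof (citing Zieve), and your proof is essentially the standard multiplicative descent argument behind that cited result, so there is nothing further to reconcile.
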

	
Theorem \ref{Th1.1} shows that polynomials of the form $x^{r}h(x^{(q-1)/d})$ have a close connection with the $d$-order subgroup $\mu_d$ of $ \mathbb{F}_{q}^\times $. Indeed there have been many construction results by using the method of constructing PPs over $\mu_d$ to obtain PPs over the original finite field.
	
All permutation binomials $x^m+ax^n\in \mathbb{F}_q[x]$ have been characterized by Zieve \cite{ref2}. The necessary and sufficient condition for  $ x^{r}(x^{(q-1)/l}+a) $ being a permutation polynomial is given by the following Theorem.
	
\begin{theorem}\label{Th1.2}\cite{ref2}
Let $ q $ be a prime power number, $r,d $ be positive integers, $ d \mid (q-1) $, $a\in \mathbb{F}_{q}^\times$, and $\eta + \frac{a}{\eta} \in \mu_{(q-1)/d}$ for all $\eta \in \mu_{2d}$,  then  $ x^{r}(x^{(q-1)/d}+a)$ is a permutation polynomial over $ \mathbb{F}_{q} $ if and only if $-a \notin \mu_{d} $, $ \gcd (r,(q-1)/d)=1$, and $ \gcd (2d,2r+(q-1)/d) \leq 2$.
\end{theorem}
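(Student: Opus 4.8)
The plan is to invoke Theorem~\ref{Th1.1} with $h(x)=x+a$ and then to rewrite its two conditions in the asserted form. Put $s=(q-1)/d$, so that $f(x)=x^{r}h(x^{s})$ with $h(x)=x+a$; Theorem~\ref{Th1.1} then says that $f$ permutes $\mathbb{F}_q$ if and only if $\gcd(r,s)=1$ and the polynomial $g(x):=x^{r}(x+a)^{s}$ permutes $\mu_d$. Since $\gcd(r,s)=1$ is already one of the three conditions we want, the whole proof reduces to showing that, \emph{under the standing hypothesis} $\eta+a\eta^{-1}\in\mu_{s}$ for every $\eta\in\mu_{2d}$, the map $g$ permutes $\mu_d$ exactly when $-a\notin\mu_d$ and $\gcd(2d,\,2r+s)\le 2$.

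First I would dispose of the condition $-a\notin\mu_d$. If $-a\in\mu_d$ then $g(-a)=(-a)^{r}\cdot 0=0\notin\mu_d$, so $g$ is not even a self-map of $\mu_d$, let alone a permutation of it. Conversely, if $-a\notin\mu_d$ then $x+a\neq 0$ for every $x\in\mu_d$, whence $(x+a)^{q-1}=1$, i.e.\ $(x+a)^{s}\in\mu_d$; since also $x^{r}\in\mu_d$, we get $g(\mu_d)\subseteq\mu_d$, and then $g$ permutes $\mu_d$ if and only if it is injective on $\mu_d$.

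The heart of the argument is to pull $g$ back through the squaring map. Because $d\mid q-1$ we have $p\nmid d$, where $p=\operatorname{char}\mathbb{F}_q$, so $\mu_{2d}$ is cyclic and $\eta\mapsto\eta^{2}$ sends $\mu_{2d}$ onto $\mu_d$; hence every $x\in\mu_d$ is of the form $\eta^{2}$ with $\eta\in\mu_{2d}$. Writing $\eta^{2}+a=\eta\,(\eta+a\eta^{-1})$ and using the hypothesis $\eta+a\eta^{-1}\in\mu_{s}$, which by definition of $\mu_s$ forces $(\eta+a\eta^{-1})^{s}=1$, one obtains the clean identity
\[
g(\eta^{2})=\eta^{2r}(\eta^{2}+a)^{s}=\eta^{2r}\,\eta^{s}\,(\eta+a\eta^{-1})^{s}=\eta^{\,2r+s}.
\]
Thus on $\mu_d$ the map $g$ is the descent, along squaring, of the power map $\eta\mapsto\eta^{2r+s}$ on $\mu_{2d}$. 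Consequently $g$ fails to be injective on $\mu_d$ precisely when there is $\omega\in\mu_{2d}$ with $\omega^{2r+s}=1$ but $\omega^{2}\neq 1$; since the elements of the cyclic group $\mu_{2d}$ annihilated by $2r+s$ form a subgroup of order $\gcd(|\mu_{2d}|,2r+s)$, such an $\omega$ exists if and only if $\gcd(2d,2r+s)>2$. (When $p=2$ one has $\mu_{2d}=\mu_d$ and $2r+s$ is odd, so $\gcd(|\mu_{2d}|,2r+s)=\gcd(d,2r+s)=\gcd(2d,2r+s)$ and the count is unchanged; when $p$ is odd the hypothesis moreover forces $s$ to be even, so the gcd is then exactly $2$.) Hence $g$ permutes $\mu_d$ iff $-a\notin\mu_d$ and $\gcd(2d,2r+s)\le 2$, which completes the equivalence.

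I expect the delicate point to be the passage through the squaring map: one must check that $\mu_{2d}$ behaves as a cyclic group in both characteristics, that the identity $g(\eta^{2})=\eta^{2r+s}$ is independent of which square root $\eta$ of $x$ is chosen (which is automatic, since $g$ is an honest function), and that injectivity of $g$ on $\mu_d$ is transported faithfully into the single number-theoretic inequality $\gcd(2d,2r+s)\le 2$. Once the identity $g(\eta^{2})=\eta^{2r+s}$ is in hand, the rest is routine cyclic-group arithmetic.
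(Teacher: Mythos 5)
Your proof is correct: the reduction via Theorem~\ref{Th1.1} to $g(x)=x^{r}(x+a)^{(q-1)/d}$ on $\mu_d$, the pull-back along the squaring map $\mu_{2d}\to\mu_d$ so that the hypothesis $\eta+a/\eta\in\mu_{(q-1)/d}$ collapses $g(\eta^{2})$ to the monomial $\eta^{2r+(q-1)/d}$, and the kernel count in the cyclic group $\mu_{2d}$ (with the characteristic~$2$ case $\mu_{2d}=\mu_d$ treated separately) all check out, including the self-map issue governed by $-a\notin\mu_d$. The paper itself offers no proof of this statement --- it is quoted from \cite{ref2} --- and your argument is essentially the original one of Zieve, so there is nothing further to compare; the only cosmetic remark is that with the hypothesis as phrased here the condition $-a\notin\mu_d$ is actually automatic (if $-a=\eta^{2}$ with $\eta\in\mu_{2d}$, then $\eta+a/\eta=0\notin\mu_{(q-1)/d}$), so your necessity argument for it, while fine, is never needed.
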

	
Theorem \ref{Th1.2} indicates that there are no permutation binomials with both nonzero coefficients equal to 1 over finite fields with even characteristic, while permutation trinomials with coefficients 1 exist \cite{ref14,ref9,ref8}. This motivates us to find a new method to construct permutation trinomials with coefficients 1.
	
We list some permutation trinomials with coefficients 1 constructed in recent years in Table \ref{tab:1}. The main construction methods include Hermite's criterion, Dobbertin's multivariate approach(\cite{ref5}), discussions of the number of solutions of special equations (\cite{ref7}), polar decomposition (\cite{ref7}),  etc. It should be noted that in many cases proving the permutation properties of PPs constructed by the method of polar decomposition or Theorem \ref{Th1.1} is equivalent to proving that the corresponding fractional polynomials permute $\mu_{\sqrt{q}+1}$. Then is it possible to obtain PPs of the original finite field by reversely using Tu's polar decomposition method by exponential sum? It seems that this idea has not appeared previously to our knowledge. The bulk of our effort in this paper is placed in this direction, and the main contribution is to present a novel construction method of PPs by using the above idea.
	
As for fractional permutation polynomials over $U$ which play important roles in the constructions of PPs by Theorem 1.1, Li et al. \cite{ref12} proved the permutation properties of $ \frac{x^{3}+x^{2}+1}{x^{3}+x+1} $ and $ \frac{x^{7}+x^{5}+1}{x^{7}+x^{2}+1} $ by determining the solutions of quadratic and quartic equations respectively. Later Li et al. \cite{ref14} proved the permutation properties of  $\frac{x^{2^{k}+1}+x^{2^{k}}+1}{x^{2^{k}+1}+x+1} $ over $U$ by rewriting it as $ x^{2^{k}}=\frac{ax+b}{cx+d} $. Main fractional permutation polynomials of $U$ over a finite field with even characteristic up to now are listed in Table \ref{tab:2}.
	
\begin{table}[h]
\renewcommand\arraystretch{2}
\caption{Some permutation trinomials with coefficients 1}
\label{tab:1}
\begin{center}
\begin{tabular}{c|c|c}
\hline
\textbf{Permutation polynomial of $ F_{2^{2m}} $}& 
\textit{\textbf{m}}& 
\textbf{Reference}\\ 
\hline	
$ x+x^{3}+x^{2^{(m+1)/2}+2} $ & odd & \cite{ref5} \\
$ x^{3\times 2^{(m+1)/2}+4}+x^{2^{(m+1)/2}+2}+x^{2^{(m+1)/2}} $ & odd & \cite{ref6} \\
$ x+x^{2^{(m+1)/2}-1}+x^{2^{m}-2^{(m+1)/2}+1} $ & odd & \cite{ref7} \\
$ x+x^{3}+x^{2^{m}-2^{(m+3)/2}+2} $ & odd  & \cite{ref7} \\
$ x+x^{2^{(m+1)/2}-1}+x^{2^{m}-2^{m/2}+1} $ & odd & \cite{ref7} \\
$ x^{2^{m/2}+4}+x^{2^{m/2+1}+3}+x^{2^{m/2+2}+1} $ & $ m\equiv 2(\mod 4) $ & \cite{ref4} \\
$ x^{5}+x^{2^{m/2}+4}+x^{5 \times 2^{m/2}} $ & $ m\equiv 4(\mod 8) $ & \cite{ref9},Theorem \ref{theorem 3.1} \\
$ x^{9}+x^{8+7 \times 2^{m}}+x^{9 \times 2^{m}} $ & $ m\equiv 2(\mod 4) $ & Theorem \ref{theorem 3.1} \\ 
\hline
\end{tabular}
\end{center}
\end{table}
	
\begin{table}[h]
\renewcommand\arraystretch{2}
\caption{Some fractional permutation polynomials over $ U $}
\label{tab:2}
\begin{center}
\begin{tabular}{c|c|c}
\hline
\textbf{Fraction polynomial}& 
\textbf{Condition}& 
\textbf{Reference}\\
\hline  
$ \frac{x^{3}+x^{2}+1}{x^{3}+x+1} $ & & \cite{ref7} \\
$ \frac{x(x^{3}+x^{2}+1)}{x^{3}+x+1} $ & $ \gcd(3,m)=1 $ & \cite{ref7} \\
$ \frac{x^{5}+x^{4}+1}{x^{5}+x+1} $ & $ m $ is odd & \cite{ref12,ref14} \\
$ \frac{x^{4}+x^{3}+1}{x(x^{4}+x+1)} $ & $ m $ is odd & \cite{ref14} \\
$ \frac{x^{7}+x^{5}+1}{x^{7}+x^{2}+1} $ & $ m $ is odd & \cite{ref14} \\
$ \frac{x(x^{5}+x+1)}{x^{5}+x^{4}+1} $ & $ m=2,4(\mod 6) $ & \cite{ref12} \\
$ \frac{x^{6}+x^{4}+1}{x(x^{6}+x^{2}+1)} $ & $ \gcd(3,m)=1 $ & \cite{ref12} \\
$ \frac{x^{2^{k}+1}+x^{2^{k}}+1}{x^{2^{k}+1}+x+1} $ & $ \gcd(2^{k}-1,2^{m}+1)=1 $ & \cite{ref14} \\
$ \frac{x^{2^{k}+1}+x^{2^{k}}+1}{x^{2^{k}+1}+x+1} $ & $ \gcd(2^{k}+1,2^{m}+1)=1 $ & \cite{ref14} \\  
\hline
\end{tabular}
\end{center}
\end{table}
	
This paper is organized as follows. In Section 2, we introduce some basic notations and Lemmas. In Section 3, we propose a new method to construct permutation trinomials with coefficients 1, which is the main result of this paper. Then in Subsection 3.2, we give the explicit compositional inverses of a class of permutation trinomials for a special case. Some concluding remarks are given in Section 4.
	
\section{Preliminaries}
	
We recall Tu's method in this section, which gave a criterion for permutation polynomials in terms of additive characters of the underlying finite fields. Let $Tr^n_1(\cdot)$ be the absolute trace function from $\mathbb{F}_{2^n}$ to $\mathbb{F}_2$, i.e. $\forall x\in \mathbb{F}_{2^n}, Tr^n_1(x)=x+x^2+\cdots+x^{2^{n-1}}$. The following Lemma characterizes permutation polynomials by using additive characters.
	
\begin{lemma}\label{lemma 2.1}\cite{ref8}
A mapping $ g: \mathbb{F}_{2^{n}} \rightarrow \mathbb{F}_{2^{n}} $ is a permutation polynomial over $ \mathbb{F}_{2^{n}} $ if and only if for every nonzero $ \gamma \in \mathbb{F}_{2^{n}} $,
\begin{equation*}\label{1}
\sum_{x \in \mathbb{F}_{2^{n}}} (-1)^{Tr_{1}^{n}(\gamma g(x))}=0.
\end{equation*}
If $ f(x) = \sum\limits_{i=1}^{t} u_{i}x^{d_{i}} $ with $u_{1} = 1$ and $\gcd(d_{1}, 2^{n}-1) = 1 $, then
\begin{equation*}\label{2}
\begin{split}
\sum_{x \in \mathbb{F}_{2^{n}}} (-1)^{Tr_{1}^{n}(\gamma f(x))}
&=\sum_{x \in \mathbb{F}_{2^{n}}} (-1)^{Tr_{1}^{n}(\gamma (\sum\limits_{i=1}^{t} u_{i}x^{d_{i}}))}\\
&=\sum_{x \in \mathbb{F}_{2^{n}}} (-1)^{Tr_{1}^{n}((\delta x)^{d_{1}}+\sum\limits_{i=2}^{t} u_{i}\delta^{d_{1}-d_{i}}(\delta x)^{d_{i}})}\\
&=\sum_{x \in \mathbb{F}_{2^{n}}} (-1)^{Tr_{1}^{n}(x^{d_{1}}+\sum\limits_{i=2}^{t} u_{i}\delta^{d_{1}-d_{i}}x^{d_{i}})}.
\end{split}
\end{equation*}
\end{lemma}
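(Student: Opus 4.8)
The plan is to treat the two assertions of the Lemma separately, both via the orthogonality relations for the additive characters of $\mathbb{F}_{2^n}$. For the first assertion, I would first recall that when $\gamma\neq 0$ the map $y\mapsto Tr_{1}^{n}(\gamma y)$ is a nonzero $\mathbb{F}_2$-linear functional on $\mathbb{F}_{2^n}$, hence takes the values $0$ and $1$ equally often, so $\sum_{y\in\mathbb{F}_{2^n}}(-1)^{Tr_{1}^{n}(\gamma y)}=0$. If $g$ is a permutation, substituting $y=g(x)$ in this identity immediately gives the claimed vanishing. For the converse, I would expand the indicator of the event $g(x)=b$ in terms of characters: for each $b\in\mathbb{F}_{2^n}$,
\[
\#\{x\in\mathbb{F}_{2^n}:g(x)=b\}=\frac{1}{2^n}\sum_{\gamma\in\mathbb{F}_{2^n}}\ \sum_{x\in\mathbb{F}_{2^n}}(-1)^{Tr_{1}^{n}(\gamma(g(x)-b))}.
\]
Isolating the term $\gamma=0$, which contributes $2^n$, and using the hypothesis to kill every term with $\gamma\neq 0$, one finds that the preimage of every $b$ has size exactly $1$, so $g$ is a permutation.

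For the second assertion, the key observation is that $\gcd(d_1,2^n-1)=1$ makes $t\mapsto t^{d_1}$ a bijection of $\mathbb{F}_{2^n}$ (it fixes $0$ and permutes $\mathbb{F}_{2^n}^\times$), so for the given nonzero $\gamma$ there is a unique $\delta\in\mathbb{F}_{2^n}^\times$ with $\delta^{d_1}=\gamma$; any negative exponents $d_1-d_i$ that arise are read inside the group $\mathbb{F}_{2^n}^\times$ (equivalently, modulo $2^n-1$). Then I would verify the pointwise identity $\gamma x^{d_i}=\delta^{d_1-d_i}(\delta x)^{d_i}$ for each $i$, which reduces to $\delta^{d_1}=\gamma$, noting that for $i=1$ the coefficient collapses to $u_1=1$; summing over $i$ gives $\gamma f(x)=(\delta x)^{d_1}+\sum_{i=2}^{t}u_i\delta^{d_1-d_i}(\delta x)^{d_i}$. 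Finally, since $x\mapsto\delta x$ is a bijection of $\mathbb{F}_{2^n}$, re-indexing the character sum by $\delta x$ yields the last line of the displayed chain.

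There is no genuine obstacle here; the statement is elementary and its proof is a bookkeeping exercise. The only points needing a little care are: (i) interpreting the exponents $d_1-d_i$ correctly when some $d_i>d_1$; and (ii) justifying the substitution $x\mapsto\delta x$, i.e. that $\delta\neq 0$, which is precisely where the hypotheses $u_1=1$ and $\gcd(d_1,2^n-1)=1$ are used—they guarantee that after the normalization the term of degree $d_1$ is exactly $(\delta x)^{d_1}$ and that $\delta$ exists in $\mathbb{F}_{2^n}^\times$.
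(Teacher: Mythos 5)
Your proposal is correct: the first equivalence is the standard orthogonality-of-additive-characters criterion (balancedness of $Tr_1^n(\gamma\,\cdot)$ for one direction, character expansion of the preimage count for the other), and the second chain follows from choosing $\delta$ with $\delta^{d_1}=\gamma$, which exists and is nonzero because $\gcd(d_1,2^n-1)=1$, together with the bijective substitution $x\mapsto\delta x$. The paper itself states this lemma without proof, citing Tu et al.\ \cite{ref8}, and your argument is the same standard one used there, so there is nothing further to compare.
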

	
Let $ m $ be a positive integer, $ n=2m $. For $ i=1,2,...,t $, $ d_{i}=s_{i}(2^{m}-1)+e $, then the computation of the above exponential sum can be converted to equation problem over $U$.
	
\begin{corollary}\label{cor2.2}\cite{ref8}%\label{corollary 3.2}
Let $ f(x) = x^{d_{1}} + \sum\limits_{i=2}^{t} u_{i}x^{d_{i}} $,  $ \gcd(d_{1},2^{n}-1) = 1 $, and  $ d_{i}=s_{i}(2^{m}-1)+e $ for every $i\in [1,t]$. Then the polynomial $ f $ is a permutation polynomial over $ \mathbb{F}_{2^{n}} $ if and only if $ \forall \delta \in \mathbb{F}_{2^{n}} $,
\begin{equation*}\label{tu2}
\begin{array}{ll}
&\sum\limits_{x \in \mathbb{F}_{2^{n}}} (-1)^{Tr_{1}^{n}( x^{d_{1}}+\sum\limits_{i=2}^{t} u_{i}\delta^{d_{1}-d_{i}}x^{d_{i}})}\\
&=(N(u_2,\cdots,u_t)-1)\cdot 2^m\\
&=0.
\end{array}
\end{equation*}
where $N(u_2,\cdots,u_t)$ is the number of $\lambda's$ in $U$ such that
\begin{equation}\label{tu-equation}
\lambda^{d_1}+\sum\limits_{i=2}^{t} u_{i}\delta^{d_{1}-d_{i}}\lambda^{d_{i}}+(\lambda^{d_1}+\sum\limits_{i=2}^{t} u_{i}\delta^{d_{1}-d_{i}}\lambda^{d_{i}})^{2^m}=0.
\end{equation}
\end{corollary}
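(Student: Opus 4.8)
The plan is to run the additive-character criterion of Lemma \ref{lemma 2.1} together with the polar (multiplicative) decomposition of $\mathbb{F}_{2^{n}}^{\times}$, and to evaluate the resulting exponential sum explicitly. Since $\gcd(d_{1},2^{n}-1)=1$, the substitution $\gamma=\delta^{d_{1}}$ is a bijection of $\mathbb{F}_{2^{n}}$, so by the displayed computation in Lemma \ref{lemma 2.1} the polynomial $f$ is a PP over $\mathbb{F}_{2^{n}}$ if and only if, for every $\delta\in\mathbb{F}_{2^{n}}^{\times}$,
\[
S(\delta):=\sum_{x\in\mathbb{F}_{2^{n}}}(-1)^{Tr_{1}^{n}(x^{d_{1}}+\sum_{i=2}^{t}u_{i}\delta^{d_{1}-d_{i}}x^{d_{i}})}=0 .
\]
Everything then comes down to evaluating $S(\delta)$.

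To do so I would use that $2^{n}-1=(2^{m}-1)(2^{m}+1)$ with $\gcd(2^{m}-1,2^{m}+1)=1$, so $\mathbb{F}_{2^{n}}^{\times}=\mathbb{F}_{2^{m}}^{\times}\cdot U$ is an internal direct product and every $x\in\mathbb{F}_{2^{n}}^{\times}$ is uniquely $x=y\lambda$ with $y\in\mathbb{F}_{2^{m}}^{\times}$ and $\lambda\in U$. Because $d_{i}=s_{i}(2^{m}-1)+e$ and $y^{2^{m}-1}=1$, one has $x^{d_{i}}=(x^{2^{m}-1})^{s_{i}}x^{e}=y^{e}\lambda^{d_{i}}$, hence
\[
x^{d_{1}}+\sum_{i=2}^{t}u_{i}\delta^{d_{1}-d_{i}}x^{d_{i}}=y^{e}\,h(\lambda),\qquad h(\lambda):=\lambda^{d_{1}}+\sum_{i=2}^{t}u_{i}\delta^{d_{1}-d_{i}}\lambda^{d_{i}} .
\]
Writing $Tr_{1}^{n}=Tr_{1}^{m}\circ Tr_{m}^{n}$ and using $y^{e}\in\mathbb{F}_{2^{m}}$ to pull $y^{e}$ through $Tr_{m}^{n}$, I get $Tr_{1}^{n}(y^{e}h(\lambda))=Tr_{1}^{m}(y^{e}H(\lambda))$ with $H(\lambda):=h(\lambda)+h(\lambda)^{2^{m}}\in\mathbb{F}_{2^{m}}$. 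The condition $H(\lambda)=0$ is precisely equation \eqref{tu-equation}, so $N:=N(u_{2},\dots,u_{t})$ equals the number of $\lambda\in U$ with $H(\lambda)=0$.

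Separating the term $x=0$, which contributes $1$, and summing over $\lambda\in U$ and $y\in\mathbb{F}_{2^{m}}^{\times}$ gives
\[
S(\delta)=1+\sum_{\lambda\in U}\ \sum_{y\in\mathbb{F}_{2^{m}}^{\times}}(-1)^{Tr_{1}^{m}(y^{e}H(\lambda))}.
\]
Now $\gcd(d_{1},2^{n}-1)=1$ forces $\gcd(e,2^{m}-1)=\gcd(d_{1},2^{m}-1)=1$, so $y\mapsto y^{e}$ permutes $\mathbb{F}_{2^{m}}^{\times}$; by orthogonality of the additive characters of $\mathbb{F}_{2^{m}}$ the inner sum equals $2^{m}-1$ if $H(\lambda)=0$ and equals $-1$ if $H(\lambda)\neq 0$. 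Collecting the $N$ values of $\lambda$ with $H(\lambda)=0$ and the remaining $2^{m}+1-N$ values yields
\[
S(\delta)=1+N(2^{m}-1)-(2^{m}+1-N)=(N-1)\,2^{m},
\]
which is the claimed identity; in particular $S(\delta)=0$ if and only if $N=1$. Combined with the first paragraph this proves the corollary.

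There is no genuinely hard step here, the argument being essentially bookkeeping, but the points that need care are: (i) checking that $x=y\lambda$ is a bona fide one-to-one parametrization of $\mathbb{F}_{2^{n}}^{\times}$ by $\mathbb{F}_{2^{m}}^{\times}\times U$ and that the reduction $x^{d_{i}}=y^{e}\lambda^{d_{i}}$ is exact; (ii) extracting from $\gcd(d_{1},2^{n}-1)=1$ the coprimality $\gcd(e,2^{m}-1)=1$ that makes the inner character sum collapse; and (iii) handling the $x=0$ term and the vacuous case $\gamma=0$, which are harmless since Lemma \ref{lemma 2.1} concerns only nonzero $\gamma$.
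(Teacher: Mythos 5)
Your proof is correct, and it is exactly the argument the paper relies on: the corollary is quoted from \cite{ref8} without proof, and the intended derivation is precisely this combination of Lemma \ref{lemma 2.1}, the polar decomposition $x=y\lambda$ with $y\in\mathbb{F}_{2^m}^{\times}$, $\lambda\in U$, and orthogonality of additive characters of $\mathbb{F}_{2^m}$. Your bookkeeping (the substitution $\gamma=\delta^{d_1}$, the reduction $x^{d_i}=y^{e}\lambda^{d_i}$, the coprimality $\gcd(e,2^m-1)=1$, and the evaluation $S(\delta)=(N-1)2^m$) is accurate, so there is nothing to add.
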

	
As already mentioned in Section 1, there is a close connection between the fractional PPs over subgroup of order $d$ and the permutations of form $ x^{r}h(x^{(q-1)/d}) $ over $\mathbb{F}_q$. Thus constructing corresponding fractional PPs over $\mu_d$ is the key point for constructing certain permutations with the above form. The following Theorem gives a general fractional PPs over $U$ which would be needed later in this paper.
	
\begin{theorem}\label{theorem 2.5}\cite{ref14}
Let $ U $ be the $ (2^m+1) $-order multiplicative subgroup of the finite field $ \mathbb{F}_{2^{{2m}}}$. If $\gcd(2^{k}-1,2^{m}+1)=1$, the reciprocal polynomial $ \frac{x^{2^k+1}+x^{2^{k}}+1}{x^{2^k+1}+x+1}$ is a permutation fractional polynomial of $U$.
\end{theorem}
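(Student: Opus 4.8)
\medskip
\noindent\textbf{Proof plan.}
Write $g(x)=\dfrac{N(x)}{h(x)}$ with $N(x)=x^{2^k+1}+x^{2^k}+1$ and $h(x)=x^{2^k+1}+x+1$, and note the identities over $\mathbb F_2$ that $N(x)=x^{2^k+1}h(1/x)$ and $N(x)+h(x)=x^{2^k}+x$, together with the basic fact that $x^{2^m}=x^{-1}$ for $x\in U$. The plan is to show that $g$ is a well-defined self-map of $U$ and is injective there; since $U$ is finite this will prove the claim. For well-definedness I would show that $h$ has no zero on $U$: if $h(x)=0$ for some $x\in U$, then applying $x\mapsto x^{2^m}$ and $h\in\mathbb F_2[x]$ gives $0=h(x)^{2^m}=h(x^{2^m})=h(x^{-1})$, hence $N(x)=x^{2^k+1}h(x^{-1})=0$, so $x^{2^k}+x=N(x)+h(x)=0$, i.e. $x^{2^k-1}=1$; then $\gcd(2^k-1,2^m+1)=1$ forces the order of $x$ to be $1$, so $x=1$, contradicting $h(1)=1$. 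The same identities give $g(x)=x^{2^k+1}h(x)^{2^m-1}$ on $U$, so $g(x)^{2^m+1}=(x^{2^m+1})^{2^k+1}h(x)^{2^{2m}-1}=1$ and $g(U)\subseteq U$.

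Next I would fix $y\in U$ and determine the fibre $g^{-1}(y)\cap U$. Since $h\neq0$ on $U$, for $x\in U$ the equation $g(x)=y$ is equivalent to $N(x)+yh(x)=0$, i.e. (using $N+h=x^{2^k}+x$) to
\begin{equation*}
(y+1)x^{2^k+1}+x^{2^k}+yx+(y+1)=0 .
\end{equation*}
If $y=1$ this reads $x^{2^k}+x=0$, so $x^{2^k-1}=1$ and hence $x=1$ by the coprimality hypothesis: the fibre over $1$ is exactly $\{1\}$. (This case also shows the hypothesis is indispensable, since if $d=\gcd(2^k-1,2^m+1)>1$ then any $x\in U$ of order $d$ satisfies $x^{2^k}=x$, whence $g(x)=1=g(1)$ as soon as $x^2+x+1\neq0$.) For $y\neq1$ one has $y+1\neq0$, and the displayed equation can be solved for $x^{2^k}$:
\begin{equation*}
x^{2^k}=\frac{yx+y+1}{(y+1)x+1},
\end{equation*}
after checking that $x=(y+1)^{-1}$, which annihilates the denominator, is not a solution — plugging it back forces $y^2+y+1=0$, a degenerate case (where also $(y+1)^{-1}\in U$) to be treated separately.

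The heart of the matter, and the step I expect to be hardest, is to prove that for every $y\neq1$ this last equation has exactly one solution $x\in U$. My plan is to use $x^{-1}=x^{2^m}$ and $x^{2^m+1}=1$ to rewrite the right-hand side as $\dfrac{(y+1)x^{2^m}+y}{x^{2^m}+(y+1)}$, and then to parametrise $U$ by $\mathbb P^1(\mathbb F_{2^m})$: fixing $\gamma\in\mathbb F_{2^{2m}}$ with $\gamma+\gamma^{2^m}=1$, every element of $U$ is uniquely $\frac{c+\gamma+1}{c+\gamma}$ with $c\in\mathbb F_{2^m}\cup\{\infty\}$. Substituting $x=\frac{c+\gamma+1}{c+\gamma}$, $y=\frac{e+\gamma+1}{e+\gamma}$ and simplifying — using that $\gamma+\gamma^{2^k}$ and $\gamma^2+\gamma+1$ lie in $\mathbb F_{2^m}$ — should turn the equation into one over $\mathbb F_{2^m}$ expressing $e$ as an explicit rational function of $c$ and $c^{2^k}$; showing $e\mapsto c$ is a bijection of $\mathbb F_{2^m}\cup\{\infty\}$ then reduces, after clearing denominators, to the injectivity of a concrete map built from $c\mapsto c^{2^k}$, which is the point at which $\gcd(2^k-1,2^m+1)=1$ is brought to bear (via $2^{m+k}\equiv-2^k\pmod{2^m+1}$ this is also the mirror of the companion condition $\gcd(2^k+1,2^m+1)=1$). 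Forcing this final count to be exactly one — rather than merely at most two, which a crude degree bound would give — is where I expect the real work to lie, and I would exploit the self-reciprocal symmetry $g(1/x)=1/g(x)$ (equivalently $N(x)=x^{2^k+1}h(1/x)$) to keep that computation under control.
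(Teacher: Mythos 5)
First, a structural point: the paper does not prove this statement at all --- Theorem 2.5 is imported verbatim from the reference of Li and Helleseth, whose argument (as the paper's introduction recalls) works directly inside $U$, rewriting the fibre condition as $x^{2^k}=\frac{ax+b}{cx+d}$ and then exploiting $x^{2^m}=x^{-1}$ on $U$ to produce a second such relation, whose combination collapses to an equation of the form $w^{2^k-1}=1$ in $U$, which the hypothesis $\gcd(2^k-1,2^m+1)=1$ resolves. Your preliminary steps are sound and match that route: well-definedness of $g$ on $U$ (no zero of $h$ in $U$), $g(U)\subseteq U$, the fibre over $y=1$ being $\{1\}$, and the reduction for $y\neq 1$ to $x^{2^k}=\frac{yx+y+1}{(y+1)x+1}$ are all correct and are exactly the right opening moves.

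However, the proposal has a genuine gap precisely at the step you yourself flag as the heart of the matter: you never prove that this twisted M\"obius equation has exactly one solution $x\in U$ for each $y\in U\setminus\{1\}$. The substitute strategy you sketch --- parametrising $U$ by $\mathbb{P}^1(\mathbb{F}_{2^m})$ via $x=\frac{c+\gamma+1}{c+\gamma}$ with $\gamma+\gamma^{2^m}=1$ --- is a faithful change of coordinates (and your claims that $\gamma^2+\gamma+1$ and $\gamma+\gamma^{2^k}$ lie in $\mathbb{F}_{2^m}$ are correct), but it does no work by itself: writing $e$ as a rational function of $c$ and $c^{2^k}$ is automatic from the fact that $g$ maps $U$ to $U$, and ``showing $e\mapsto c$ is a bijection of $\mathbb{F}_{2^m}\cup\{\infty\}$'' is literally the original statement transported to new coordinates. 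No mechanism is exhibited by which $\gcd(2^k-1,2^m+1)=1$ enters this transported problem (the remark about $2^{m+k}\equiv-2^k \pmod{2^m+1}$ is not an argument), and the degenerate case $y^2+y+1=0$, which does occur in $U$ whenever $m$ is odd, is deferred and never handled. The direct route that closes the gap stays in $U$: take the $2^m$-th power of $x^{2^k}=\frac{yx+y+1}{(y+1)x+1}$, use $x^{2^m}=x^{-1}$, $y^{2^m}=y^{-1}$ to get a second expression for $x^{2^k}$ (or for $x^{-2^k}$), and eliminate; uniqueness then reduces to an equation $w^{2^k-1}=1$ with $w\in U$, where the coprimality hypothesis forces $w=1$. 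As it stands, your plan establishes everything except the one assertion that carries the content of the theorem.
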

	
\section{A new method of construction of permutation trinomials with coefficients 1}
	
In this section, we firstly give a new construction method in Subsection 3.1. In order to illustrate how to use the new method, we give the construction process of a class of trinomials, and the explicit compositional inverses of this class of permutation trinomials for a special case are given in Subsection 3.2.
	
\subsection{New method}
	
In \cite{ref8}, Tu et al. proposed a criterion for permutation polynomials by using polar decomposition. They characterized the permutation properties of polynomials by calculating Walsh spectra of the corresponding Boolean functions in terms of additive characters of finite fields. Specifically, the study of permutation behavior of polynomials $\sum_{i=1}^t u_ix^{d_i}$ satisfying $d_i\equiv d_j(\mod 2^{n/2-1})$  for each pair of exponents $(d_i,d_j)$ is converted to the study of permutation behavior of polynomials $g(x)+g(x)^{2^m}$ over $U$. In this subsection, we are inspired by the research of \cite{ref8}, and reversely use the polar decomposition method to deduce a large class of permutation trinomials with coefficients 1.
	
\begin{theorem}\label{theorem 3.1}
Let $n=2m,m>0$, $ i,j\in \mathbb{N}^{+}$, $ i \neq j $, $u\in \mathbb{Z}$, $J=2^j,I=2^i$. Suppose $ \gcd(d_1,2^{2m}-1)=1 $ and $ \gcd(2^{i}-2^{j},2^{m}+1)=1 $. And
\begin{equation*}
\left\{
\begin{array}{ll}
d_1&=2^{i-1}-2^{j-1}+u \times (2^{m}+1),\\
d_2&=2^{i-1}+2^{j-1}+(u-2^{j-1}) \times (2^{m}+1),\\
d_3&=-(2^{i-1}+2^{j-1})+(u+2^{i-1}) \times (2^{m}+1).
\end{array}
\right.
\end{equation*}
where $\frac{1}{J-I}(\mod  2^m+1)\equiv t$, $0< t< 2^m+1$. Then  $$ x^{d_1}+x^{d_2}+x^{d_3}$$ is a permutation polynomial over $ \mathbb{F}_{2^{n}} $.
\end{theorem}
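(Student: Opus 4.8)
The idea is to run Tu's criterion (Corollary~\ref{cor2.2}) and show that the associated character sums vanish because the equation they produce over $U$ is, after a change of variables, exactly the graph of the fractional permutation of Theorem~\ref{theorem 2.5}. First I would note that $d_1,d_2,d_3$ lie in a common residue class $e:=2^{i-1}-2^{j-1}+2u \pmod{2^m-1}$, so that $d_\ell=s_\ell(2^m-1)+e$ with $s_1=u$, $s_2=u-2^{j-1}$, $s_3=u+2^{i-1}$. Since $\gcd(d_1,2^{2m}-1)=1$, Corollary~\ref{cor2.2} applies to $f(x)=x^{d_1}+x^{d_2}+x^{d_3}$ with $u_2=u_3=1$: $f$ permutes $\mathbb{F}_{2^{2m}}$ if and only if, for every nonzero $\delta$, the number $N(\delta)$ of $\lambda\in U$ satisfying \eqref{tu-equation} equals $1$. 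So the whole problem reduces to counting solutions of a single equation over $U$.

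Next I would make that equation explicit. For $\lambda\in U$ one has $\lambda^{2^m-1}=\lambda^{-2}$, whence $\lambda^{d_1}=\lambda^{2^{i-1}-2^{j-1}}$, $\lambda^{d_2}=\lambda^{2^{i-1}+2^{j-1}}$ and $\lambda^{d_3}=\lambda^{-(2^{i-1}+2^{j-1})}$; moreover $d_1-d_2=2^{j-1}(2^m-1)$ and $d_1-d_3=-2^{i-1}(2^m-1)$, so on setting $\beta:=\delta^{2^m-1}\in U$ the coefficients become $\delta^{d_1-d_2}=\beta^{2^{j-1}}$ and $\delta^{d_1-d_3}=\beta^{-2^{i-1}}$. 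Thus \eqref{tu-equation} depends on $\delta$ only through $\beta$, and $\beta$ ranges over $U$. Writing $a=2^{i-1}$, $b=2^{j-1}$ and using $\lambda^{2^m}=\lambda^{-1}$, $\beta^{2^m}=\beta^{-1}$, equation \eqref{tu-equation} becomes, after expanding and collecting,
\begin{equation*}
\lambda^{a-b}+\lambda^{-(a-b)}+(\beta^{a}+\beta^{b})\bigl(\lambda^{a+b}+\beta^{-(a+b)}\lambda^{-(a+b)}\bigr)=0 .
\end{equation*}

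The core of the proof is reducing this equation to Theorem~\ref{theorem 2.5}. Assume $i>j$ (for $i<j$ one argues identically, with $2^{i-1}$ and $2^{j-1}$ interchanged) and set $k=i-j$, so $a=2^{k}b$, $a+b=(2^{k}+1)b$, $a-b=(2^{k}-1)b$, and the hypothesis $\gcd(2^{i}-2^{j},2^{m}+1)=1$ is equivalent to $\gcd(2^{k}-1,2^{m}+1)=1$. Multiplying the displayed equation by $\lambda^{a+b}$ and applying the successive substitutions $\nu=\lambda^{2}$, $\mu=\beta\nu$, $\tau=\mu^{b}$ --- all bijections of $U$, the squaring and $2^{j-1}$-power maps because $2^{m}+1$ is odd and $\nu\mapsto\beta\nu$ trivially --- and writing $c:=\beta^{a-b}=(\beta^{b})^{2^{k}-1}\in U$, a short computation turns the equation into
\begin{equation*}
c\,(\tau^{2^{k}+1}+\tau+1)=\tau^{2^{k}+1}+\tau^{2^{k}}+1 .
\end{equation*}
If $\tau^{2^{k}+1}+\tau+1$ vanished at a solution, the right-hand side would vanish too, forcing $\tau^{2^{k}}=\tau$ and hence $\tau=1$ (using $\gcd(2^{k}-1,2^{m}+1)=1$), which contradicts $1+1+1=1\neq 0$; so the equation is equivalent to $c=\frac{\tau^{2^{k}+1}+\tau^{2^{k}}+1}{\tau^{2^{k}+1}+\tau+1}$. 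By Theorem~\ref{theorem 2.5} the right-hand side permutes $U$, so there is exactly one $\tau\in U$ taking the value $c$, and hence exactly one $\lambda\in U$ solving the original equation. Therefore $N(\delta)=1$ for every nonzero $\delta$, and $f$ is a permutation polynomial. (The quantity $t$ in the statement is not needed for the permutation property; it serves only for inverting $c=\beta^{a-b}$ in the explicit compositional inverse of Subsection~3.2.)

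The step I expect to be the real obstacle is exactly this chain of substitutions --- carrying the algebra through cleanly, including the degenerate case $\beta=1$ (equivalently $c=1$), where several of the factors collapse, and disposing of the vanishing-denominator possibility. The earlier steps are essentially bookkeeping; once the reduction is in place, Theorem~\ref{theorem 2.5} closes the argument at once.
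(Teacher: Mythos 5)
Your proof is correct and rests on exactly the paper's two pillars --- Tu's criterion (Corollary~\ref{cor2.2}) and the fractional permutation of Theorem~\ref{theorem 2.5} --- with essentially the same polar-decomposition change of variables over $U$; the only difference is direction: you verify $N(\delta)=1$ directly by reducing Tu's equation to the fractional one, whereas the paper starts from the fractional permutation, massages it into Tu's shape, and reverse-engineers the exponents, asserting at the end that the reduction reverses. If anything your forward version is the tighter write-up (it makes the vanishing-denominator and $\beta=1$ checks explicit and correctly observes that the parameter $t$ plays no role in the permutation property), but mathematically it is the same argument.
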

\begin{proof}
Write $k: =j-i$. Since $ \gcd(2^{i}-2^{j},2^{m}+1)=\gcd(2^k-1,2^m+1)=1 $, by Theorem \ref{theorem 2.5}, $$ f_{k}(x)=\frac{x^{K+1}+x^{K}+1}{x^{K+1}+x+1}, K=2^{k}, $$ permutes $ U $.
		
Note $ \gcd(I,2^{k}-1)=1 $, we have $ g(x)=x^{I} $ is a permutation polynomial over $ U $, and $ g(f_{k}(x))=(\frac{x^{K+1}+x^{K}+1}{x^{K+1}+x+1})^{I} $ is also a permutation polynomial of $ U $. Therefore, for all $\beta\in U$, $$ (x^{K+1}+x^{K}+1)^{I}=\beta(x^{K+1}+x+1)^{I}$$ has only one solution in $ U $. That is,
\begin{equation}\label{Th3.5-1}
(1+\beta)x^{I+J}+x^{J}+\beta x^{I}+1+\beta=0, J=KI,
\end{equation}
has only one solution in $ U $.
		
For all $t\in \mathbb{Z}$, let $ x=\beta^{t}y $. Substituting it into (\ref{Th3.5-1}), we get
\begin{equation*}\label{16}
(\beta^{t(I+J)+1}+\beta^{t(I+J)})y^{I+J}+\beta^{tJ} y^{J}+\beta^{tI+1} y^{I}+1+\beta=0.
\end{equation*}
		
If $ \gcd(J-I,2^{m}+1)=1 $, there exists $t\equiv\frac{1}{J-I}(\mod 2^m+1)$, $0<t<2^m+1$, such that $tJ=tI+1+b(2^m+1)=L$, where $b\in \mathbb{Z}$. Note $ \beta^{L} \in U $. Hence
\begin{equation}\label{Th3.5-2}
(\beta^{2L}+\beta^{2L-1})y^{I+J}+\beta^{L} y^{J}+\beta^{L} y^{I}+1+\beta=0.
\end{equation}
		
Write $ L_{1}:=\frac{I+J}{2}, L_{2}:=\frac{I-J}{2} $. Dividing both sides of equation (\ref{Th3.5-2}) by $ \beta^{L}y^{L_{1}} $, we have
\begin{equation*}\label{18}
(\beta^{L}+\beta^{L-1})y^{L_{1}}+y^{L_{2}}+ y^{-L_{2}}+(\beta^{-L}+\beta^{-(L-1)})y^{-L_{1}}=0,
\end{equation*}
i.e.,
\begin{equation}\label{tu-shape}
\beta^{L}y^{L_{1}}+y^{L_{2}}+ \beta^{-(L-1)}y^{-L_{1}}+(\beta^{L}y^{L_{1}}+y^{L_{2}}+ \beta^{-(L-1)}y^{L_{1}})^{2^m}=0.
\end{equation}
		
According to Lemma \ref{tu2}, the three-term polynomial obtained by the method of polar decomposition should be
\begin{equation}\label{Th3.5-3}
y^{L_{2}}+\beta^{L}y^{L_{1}}+ \beta^{-(L-1)}y^{-L_{1}}.
\end{equation}
		
On the other hand, for the three-term polynomial with coefficients 1, the simplified polynomial form deduced from the proof of permutation polynomial over the underlying finite field by Tu's method in \cite{ref8} is
\begin{equation}\label{Th3.5-4}
y^{d_{1}}+\delta^{d_{1}-d_{2}}y^{d_{2}}+\delta^{d_{1}-d_{3}}y^{d_{3}}, y\in U, \delta \in \mathbb{F}_{2^{n}}.
\end{equation}
		
Let $ Q=2^{m}+1 $. According to (\ref{Th3.5-3}) and (\ref{Th3.5-4}), we can assume that
\begin{equation}\label{Th3.5-5}
\left\{\begin{matrix} L_{2} \equiv d_{1}\ (\mod Q), \\ L_{1} \equiv d_{2}\ (\mod Q), \\ -L_{1} \equiv d_{3}\ (\mod Q). \end{matrix}\right.
\end{equation}
		
Furthermore we can assume $ \exists u_{1},u_{2},u_{3},u_{4},u_{5}, a \in \mathbb{Z} $, such that $ \forall y \in U $, $ \delta \in \mathbb{F}_{2^{n}} $,
\begin{equation}\label{Th3.5-6}
\begin{array}{ll}
&y^{L_{2}}+\beta^{L}y^{L_{1}}+ \beta^{-(L-1)}y^{-L_{1}}\\
=&y^{L_{2}+u_{1}Q}+\delta^{(Q-2)aL+u_{4}Q(Q-2)}y^{L_{1}+u_{2}Q}+\delta^{(Q-2)a(1-L)+u_{5}Q(Q-2)}y^{-L_{1}+u_{3}Q}.
\end{array}
\end{equation}
		
By (\ref{Th3.5-4}) and the polynomial form of (\ref{Th3.5-6}), we have
\begin{equation}\label{24}
\left\{\begin{matrix}  d_{1}=L_{2}+u_{1}Q, \\ d_{1}-d_{2}=(Q-2)aL+u_{4}Q(Q-2), \\ d_{1}-d_{3}=(Q-2)a(1-L)+u_{5}Q(Q-2).  \end{matrix}\right.
\end{equation}
		
Thus,	
\begin{equation}\label{amend 1}
\left\{\begin{matrix}  d_{2}=L_{2}+u_{1}Q-(Q-2)aL-u_{4}Q(Q-2), \\ d_{3}=L_{2}+u_{1}Q+(Q-2)a(L-1)-u_{5}Q(Q-2).  \end{matrix}\right.
\end{equation}
		
By (\ref{Th3.5-5}) and (\ref{amend 1}), we have
\begin{equation}\label{amend 2}
\left\{\begin{matrix}  d_{2} \equiv L_{2}+2aL \equiv L_{1}\ (\mod Q), \\ d_{3} \equiv L_{2}-2a(L-1) \equiv -L_{1}\ (\mod Q).  \end{matrix}\right.
\end{equation}
		
Since $ L_{1}=\frac{I+J}{2}$, $L_{2}=\frac{I-J}{2} $, $ t\equiv\frac{1}{J-I}(\mod 2^m+1) $, and $L=tJ=tI+1+b(2^m+1)$, by solving the equations (\ref{amend 2}), we get
\begin{equation}\label{amend 3}
a \equiv \frac{1}{2t}\ (\mod Q). 
\end{equation} 
		
Substitute $ a=\frac{1}{2t}+u_{6}Q $ ($ u_{6}\in \mathbb{Z} $) into the equations (\ref{amend 1}),
\begin{equation}\label{amend 4}
\left\{\begin{matrix}  d_{2}=L_{2}+u_{1}Q-(Q-2)\frac{J}{2}-(u_{4}+u_{6}tJ)Q(Q-2), \\ d_{3}=L_{2}+u_{1}Q+(Q-2)\frac{I}{2}+(\frac{b}{2t}+u_{6}tI+u_{6}bQ-u_{5})Q(Q-2).  \end{matrix}\right.
\end{equation}
		
Substitute $L_{2}$ into $d_{1}$, $d_{2}$ and $d_{3}$, and replace $u_{1}$ by $u$, we have
\begin{equation}\label{amend 5}
\left\{\begin{matrix}  d_{1} \equiv \frac{I-J}{2}+uQ\ (\mod Q(Q-2)), \\ d_{2} \equiv \frac{I-J}{2}+uQ-(Q-2)\frac{J}{2}\ (\mod Q(Q-2)), \\ d_{3} \equiv \frac{I-J}{2}+uQ+(Q-2)\frac{I}{2}\ (\mod Q(Q-2)).  \end{matrix}\right.
\end{equation}
		
Thus, we deduce that the trinomial is
\begin{equation}\label{tu-backward}
x^{\frac{I-J}{2}+uQ}+x^{\frac{I-J}{2}+uQ-(Q-2)\frac{J}{2}}+x^{\frac{I-J}{2}+uQ+(Q-2)\frac{I}{2}}.
\end{equation}
It can be seen that the equation (\ref{tu-shape}) deduced from polar decomposition of (\ref{tu-backward}) has one and only one solution of $U$ for every $\beta\in U$.
		
Therefore,
\begin{equation*}
\left\{
\begin{array}{ll}
d_1&=2^{i-1}-2^{j-1}+u \times (2^{m}+1),\\
d_2&=2^{i-1}+2^{j-1}+(u-2^{j-1}) \times (2^{m}+1),\\
d_3&=-(2^{i-1}+2^{j-1})+(u+2^{i-1}) \times (2^{m}+1).
\end{array}
\right.
\end{equation*}
		
By Corollary 2.2, we have proved that for all $i,j \in \mathbb{N}^{+}$, $ i \neq j $, $ u\in \mathbb{Z} $ , if $ \gcd(2^{i-1}-2^{j-1}+u \times (2^{m}+1),2^{2m}-1)=1 $ and $ \gcd(2^{i}-2^{j},2^{m}+1)=1 $,  the polynomial $$x^{d_1}+x^{d_2}+x^{d_3}$$ is a permutation polynomial over $ \mathbb{F}_{2^{n}} $. The proof is complete. 
\end{proof}
	
\subsection{Compositional inverses of permutation trinomials in Subsection 3.1 for $ i=j+m-1 $}
Compositional inverses of permutation polynomials have important applications in cryptography. For instance, in multivariate public key cryptography, permutation polynomials with coefficients 1 can be used as central map due to their simple form, thus, their explicit compositional inverses will be needed in the decryption process.

For a polynomial $ f(x) $ over $ \mathbb{F}_{q} $, $ f^{-1}(x) $ is the compositional inverse of $ f(x) $ if $ f(f^{-1}(x)) \equiv f^{-1}(f(x)) \equiv x \pmod {x^{q}-x} $. In this subsection, we give the explicit compositional inverses of the permutation trinomials in Subsection 3.1 for a special case $ i=j+m-1 $.
	
\begin{lemma}\label{lemma 3.1}\cite{ref15}
Let $ q $ be a prime power and $ f(x)=x^{r}h(x^{s}) \in \mathbb{F}_{q}[x] $ be a permutation polynomial, where $ s | (q-1) $, $ \gcd(r,q-1)=1 $ and $ h(0) \neq 0 $. Let $ r' $ be an integer which satisfies $ rr' \equiv 1 \pmod {q-1} $, $ \alpha(x)=x^{q-s} $, $ \beta(x)=x^{s} $ and $ l(x) $ be the compositional inverse of $ g(x)=x^{r}h(x)^{s} $ over $ \mu_{(q-1)/s} $, then $$ f^{-1}(x)=(\alpha(x)h(l(\beta(x)))^{s-1})^{r'}l(\beta(x)) $$ is the compositional inverse of $ f(x) $ over $ \mathbb{F}_{q} $.
\end{lemma}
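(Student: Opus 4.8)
The plan is to verify the defining identity of a compositional inverse directly. Since $f$ is assumed to be a permutation of $\mathbb{F}_q$ and the displayed expression is a genuine polynomial map $\mathbb{F}_q\to\mathbb{F}_q$, it is enough to show $f^{-1}\big(f(x)\big)=x$ for every $x\in\mathbb{F}_q$. Put $d=(q-1)/s$; since $f$ is a permutation, $g(x)=x^{r}h(x)^{s}$ permutes $\mu_d$ (this is the second condition of Theorem \ref{Th1.1}), so the polynomial $l$ is well defined by $l\big(g(z)\big)=z$ for all $z\in\mu_d$. I would dispose of the input $x=0$ first: $s\mid q-1$ forces $q-s\ge 1$, hence $\alpha(0)=0$ and $\beta(0)=0$, so $f^{-1}\big(f(0)\big)=f^{-1}(0)=0$ once $r'$ is chosen positive; this is the only place $l$ is evaluated outside $\mu_d$, and it is annihilated immediately by the factor $\alpha(0)=0$.

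For $x\in\mathbb{F}_q^\times$ set $y=f(x)=x^{r}h(x^{s})$, which lies in $\mathbb{F}_q^\times$ because $f$ fixes $0$ and is a bijection. The argument hinges on two identities. The first is the reduction relation
\[
\beta(y)=y^{s}=\big(x^{r}h(x^{s})\big)^{s}=(x^{s})^{r}h(x^{s})^{s}=g(x^{s}),
\]
with $x^{s}\in\mu_d$; applying $l$ yields the second identity $l\big(\beta(y)\big)=l\big(g(x^{s})\big)=x^{s}$, so in particular $h\big(l(\beta(y))\big)=h(x^{s})$. For the remaining factor, reducing exponents modulo $q-1$ for the nonzero element $y$ (using $y^{q}=y$),
\[
\alpha(y)=y^{q-s}=y^{1-s}=\frac{x^{r}h(x^{s})}{g(x^{s})}=x^{r(1-s)}h(x^{s})^{1-s}.
\]

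Now combine: the powers of $h(x^{s})$ cancel,
\[
\alpha(y)\,h\big(l(\beta(y))\big)^{s-1}=x^{r(1-s)}h(x^{s})^{1-s}\cdot h(x^{s})^{s-1}=x^{r(1-s)},
\]
and raising to the $r'$-th power and using $rr'\equiv 1\pmod{q-1}$ with $x^{q-1}=1$ gives $\big(\alpha(y)\,h(l(\beta(y)))^{s-1}\big)^{r'}=x^{rr'(1-s)}=x^{1-s}$. Hence
\[
f^{-1}(y)=x^{1-s}\cdot l\big(\beta(y)\big)=x^{1-s}\cdot x^{s}=x,
\]
as required. I do not expect a real obstacle: once the two identities $y^{s}=g(x^{s})$ and $l(g(x^{s}))=x^{s}$ are in hand, everything reduces to exponent bookkeeping. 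The only points that demand care are (i) tracking the reductions modulo $q-1$, so that the factor $x^{rr'}$ can be cleared and the possibly non-positive exponent $1-s$ is legitimate for the nonzero element $x$, and (ii) the input $x=0$, which must be handled on its own since $l$ is a priori defined only on $\mu_d$.
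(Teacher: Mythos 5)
Your proof is correct. Note that this lemma is quoted from \cite{ref15} and the paper gives no proof of it, so there is no in-paper argument to compare against; your direct verification stands on its own, and it is the natural one. The two identities you isolate, $\beta(f(x))=f(x)^{s}=g(x^{s})$ with $x^{s}\in\mu_{(q-1)/s}$ and $l(g(x^{s}))=x^{s}$, are exactly what makes the displayed formula collapse, and you handle the two genuinely delicate points correctly: the input $x=0$ (killed by $\alpha(0)=0$, with $r'$ taken positive), and the reduction of exponents modulo $q-1$, which is legitimate because all the elements involved ($x$, $y=f(x)$, and hence $g(x^{s})=y^{s}$) are nonzero. Two small polishing remarks: when you rewrite $y^{1-s}$ as $x^{r(1-s)}h(x^{s})^{1-s}$ you implicitly use $h(x^{s})\neq 0$, which does follow from $y=x^{r}h(x^{s})\neq 0$ and $x\neq 0$ but deserves a word; and it is worth stating explicitly (as you essentially do) that the well-definedness of $l$ rests on Theorem \ref{Th1.1}, i.e.\ on $g$ permuting $\mu_{(q-1)/s}$, which the hypothesis ``$f$ is a permutation polynomial'' guarantees. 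Also note that the extra hypothesis $\gcd(r,q-1)=1$ is precisely what lets you clear the factor $x^{rr'}$ at the end, so your argument uses all the assumptions of the statement. Finally, your observation that checking $f^{-1}(f(x))=x$ alone suffices is valid, since $f$ is a bijection of the finite set $\mathbb{F}_{q}$.
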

	
The permutation trinomials over $ \mathbb{F}_{2^{2m}} $ constructed by Theorem {\rm\ref{theorem 3.1}} can be written as $ f(x)=x^{d_1}(1+(x^{2^m-1})^{-2^{j-1}}+(x^{2^m-1})^{2^{i-1}}) $, thus the corresponding polynomials over $ U $ are $ g(x)=x^{d_1}(1+x^{-2^{j-1}}+x^{2^{i-1}})^{2^m-1} $. According to Lemma \ref{lemma 3.1}, the key of finding explicit compositional inverses of $ f(x) $ over $ \mathbb{F}_{2^{2m}} $ is the compositional inverses of $ g(x) $ over $ U $.
	
\begin{theorem}\label{theorem 3.4}
Let $ i=j+m-1 $, the compositional inverses of $ g(x)=x^{d_1}(1+x^{-2^{j-1}}+x^{2^{i-1}})^{2^m-1} $ are $ g^{-1}(x)=x^{r_{1}r_{2}} $, where $ r_1 \cdot 2^{m-1} \equiv 1 \pmod {2^m+1} $ and $ r_2 \cdot 2^{j-1} \equiv 1 \pmod {2^m+1} $.
\end{theorem}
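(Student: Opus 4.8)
The plan is to show that, although $g$ is written as $x^{d_1}$ times a genuine $(2^m-1)$-th power of a trinomial, on $U$ it in fact acts as a single power map, whose inverse can then be read off directly. Throughout I would use the defining feature of $U$: every $x\in U$ satisfies $x^{2^m+1}=1$, so $x^{2^m}=x^{-1}$ for $x\in U$, the value $x^{a}$ for $x\in U$ depends only on $a\bmod(2^m+1)$, and $2^m\equiv-1\pmod{2^m+1}$.

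First I would simplify the trinomial factor. Writing $N(x)=1+x^{-2^{j-1}}+x^{2^{i-1}}$, on $U$ we have $g(x)=x^{d_1}N(x)^{2^m-1}=x^{d_1}\,N(x)^{2^m}N(x)^{-1}$, and raising $N(x)$ to the $2^m$-th power and using $x^{2^m}=x^{-1}$ yields $N(x)^{2^m}=1+x^{2^{j-1}}+x^{-2^{i-1}}$ as a function on $U$. The crux, where the hypothesis $i=j+m-1$ enters, is that then $2^{j-1}+2^{i-1}\equiv-2^{i-1}\pmod{2^m+1}$; indeed $2^{j-1}+2^{i}=2^{j-1}(1+2^{i-j+1})=2^{j-1}(1+2^{m})\equiv0\pmod{2^m+1}$. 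Hence, on $U$, $x^{2^{j-1}}N(x)=x^{2^{j-1}}+1+x^{2^{j-1}+2^{i-1}}=1+x^{2^{j-1}}+x^{-2^{i-1}}=N(x)^{2^m}$. Since $f$ is a permutation polynomial by Theorem \ref{theorem 3.1} and $\gcd(d_1,2^m-1)=1$, Theorem \ref{Th1.1} forces $g$ to permute $U$; in particular $g(x)\in U$, so $g(x)\neq0$ and hence $N(x)\neq0$ for every $x\in U$. Dividing the preceding identity by $N(x)$ gives $N(x)^{2^m-1}=x^{2^{j-1}}$ on $U$, whence
\begin{equation*}
g(x)=x^{d_1+2^{j-1}}=x^{2^{i-1}}\qquad(x\in U),
\end{equation*}
the last equality using $d_1\equiv 2^{i-1}-2^{j-1}\pmod{2^m+1}$.

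It then remains only to invert the power map $x\mapsto x^{2^{i-1}}$ on $U$. As $2^m+1$ is odd, $2^{i-1}$ is a unit modulo $2^m+1$, so this map permutes $U$ with inverse $x\mapsto x^{e}$ for any $e$ satisfying $e\cdot 2^{i-1}\equiv1\pmod{2^m+1}$. Using $i=j+m-1$ once more, $2^{i-1}=2^{(j-1)+(m-1)}=2^{j-1}\cdot 2^{m-1}$, so one may take $e\equiv(2^{j-1})^{-1}(2^{m-1})^{-1}\equiv r_1r_2\pmod{2^m+1}$ with $r_1,r_2$ as in the statement, which gives $g^{-1}(x)=x^{r_1r_2}$ on $U$.

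I do not expect a serious obstacle: the argument is short once one observes that $i=j+m-1$ is precisely the condition making $N(x)^{2^m}$ proportional to $N(x)$ (via the monomial $x^{2^{j-1}}$) on $U$, which collapses $g$ to a power map. The one point needing a little care is the claim $N(x)\neq0$ on $U$; I would establish it as above through Theorem \ref{Th1.1}, but alternatively one can check that the three exponents $0,-2^{j-1},2^{i-1}$ of $N$ are pairwise distinct modulo $2^m+1$ and note that a zero of $N$ on $U$ would contradict $g$ being a bijection of $U$.
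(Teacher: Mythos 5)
Your proof is correct, and it reaches the statement by a genuinely different (and arguably more illuminating) route than the paper. The paper's proof is a direct verification: it rewrites $g$ on $U$ as the fraction $\frac{x^{2^{i-1}-2^{j-1}}+x^{2^{i-1}}+x^{-2^{j-1}}}{1+x^{-2^{j-1}}+x^{2^{i-1}}}$, substitutes $x^{r_1r_2}$, and collapses the resulting fraction to $x$ via the congruence $(r_1+2)2^{m-1}\equiv 0 \pmod{2^m+1}$, i.e.\ $-r_1-1\equiv 1$. You instead derive the inverse: using $x^{2^m}=x^{-1}$ on $U$ and the identity $2^{j-1}+2^{i}=2^{j-1}(2^m+1)$ supplied by $i=j+m-1$, you show $x^{2^{j-1}}N(x)=N(x)^{2^m}$, hence $N(x)^{2^m-1}=x^{2^{j-1}}$ and $g(x)=x^{d_1+2^{j-1}}=x^{2^{i-1}}$ on $U$; the inverse exponent $r_1r_2\equiv(2^{i-1})^{-1}\pmod{2^m+1}$ is then immediate from $2^{i-1}=2^{j-1}2^{m-1}$. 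What your approach buys is structural insight: it exhibits $g|_U$ as a power map, so the inverse is produced rather than merely checked, and the role of the hypothesis $i=j+m-1$ is transparent; the paper's computation is slightly shorter but only establishes $g(g^{-1}(x))=x$ (which on a finite set does suffice). You are also more careful on a point the paper leaves implicit: both arguments divide by $N(x)$ (the paper does so when it writes $N^{2^m-1}$ as the fraction $N^{2^m}/N$), and your justification that $N$ has no zeros on $U$ — via Theorem \ref{Th1.1} applied to the permutation trinomial of Theorem \ref{theorem 3.1}, or simply because $g$ must map $U$ into $U$ to be invertible there — closes that small gap. The only cosmetic caveat is that $h$ with the exponent $-2^{j-1}$ is not literally a polynomial, so invoking Theorem \ref{Th1.1} requires reducing exponents modulo $2^m+1$ first; the paper commits the same abuse, so this is not a defect of your argument.
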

	
\begin{proof}
$ g(x) $ can be written as:$$ g(x)=\frac{x^{2^{i-1}-2^{j-1}}+x^{2^{i-1}}+x^{-2^{j-1}}}{1+x^{-2^{j-1}}+x^{2^{i-1}}}, $$
thus,
\begin{equation*}
\begin{split}
g(g^{-1}(x))&=\frac{x^{r_1r_22^{j-1}2^{m-1}-r_1r_22^{j-1}}+x^{r_1r_22^{j-1}2^{m-1}}+x^{-r_1r_22^{j-1}}}{1+x^{-r_1r_22^{j-1}}+x^{r_1r_22^{j-1}2^{m-1}}} \\
&=\frac{x^{1-r_1}+x+x^{-r_1}}{1+x^{-r_1}+x} \\
&=\frac{x(x^{-r_1}+1+x^{-r_1-1})}{x^{-r_1}+1+x}.
\end{split}
\end{equation*}
		
Since $ (r_1+2)2^{m-1} \equiv r_12^{m-1}+2^m \equiv 2^m+1 \equiv 0 \pmod{2^m+1} $ and $ \gcd(2^{m-1},2^m+1)=1 $, $ (r_1+2) \equiv 0 \pmod{2^m+1} $, that is, $ (-r_1-1) \equiv 1 \pmod {2^m+1} $. Therefore, $ g(g^{-1}(x))=x $. The proof is complete. 
\end{proof}
	
According to Lemma \ref{lemma 3.1} and Theorem \ref{theorem 3.4}, we obtain the next Theorem easily.
	
\begin{theorem}\label{theorem 3.5}
Let $ i=j+m-1 $, the explicit compositional inverses of $ f(x)=x^{d_1}(1+(x^{2^m-1})^{-2^{j-1}}+(x^{2^m-1})^{2^{i-1}}) $ is
$$ f^{-1}(x)=(\alpha(x)h(g^{-1}(\beta(x)))^{2^m-2})^{r_{3}}(\beta(x)), $$
where $ \alpha(x)=x^{2^{2m}-2^m+1} $, $ \beta(x)=x^{2^m-1} $, $ h(x)=1+x^{-2^{j-1}}+x^{2^{i-1}} $, $ l(x)=x^{r_1r_2} $, $ r_1 \cdot 2^{m-1} \equiv 1 \pmod {2^m+1} $, $ r_2 \cdot 2^{j-1} \equiv 1 \pmod {2^m+1} $, $ r_3 \cdot d_1 \equiv 1 \pmod {2^{2m}-1} $.
\end{theorem}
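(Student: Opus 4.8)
The plan is to deduce Theorem \ref{theorem 3.5} by feeding the explicit inverse computed in Theorem \ref{theorem 3.4} into the general inversion formula of Lemma \ref{lemma 3.1}; essentially all of the substantive work has already been done, so this last step is an assembly of the two results.

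First I would cast $f$ in the exact shape demanded by Lemma \ref{lemma 3.1}. Put $q=2^{2m}$, $s=2^{m}-1$ and $h(x)=1+x^{-2^{j-1}}+x^{2^{i-1}}$, so that $f(x)=x^{d_{1}}h(x^{s})$ and $f$ is a permutation polynomial by Theorem \ref{theorem 3.1}. Since $s\mid q-1$ and $(q-1)/s=2^{m}+1$, the group $\mu_{(q-1)/s}$ occurring in Lemma \ref{lemma 3.1} is precisely $U$; the remaining hypotheses $\gcd(d_{1},q-1)=1$ and ``$h(0)\neq 0$'' are the standing assumption of Theorem \ref{theorem 3.1} and a matter of clearing the negative exponent in $h$ (equivalently, of interpreting $h$ on $U$, which is the only place it is evaluated in the inversion formula). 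Lemma \ref{lemma 3.1} then gives
$$f^{-1}(x)=\bigl(\alpha(x)\,h\bigl(l(\beta(x))\bigr)^{s-1}\bigr)^{r'}\,l(\beta(x)),$$
with $\alpha(x)=x^{q-s}=x^{2^{2m}-2^{m}+1}$, $\beta(x)=x^{s}=x^{2^{m}-1}$, $r'=r_{3}$ where $r_{3}d_{1}\equiv 1\pmod{2^{2m}-1}$, and $l$ the compositional inverse over $U$ of $g(x)=x^{d_{1}}h(x)^{2^{m}-1}$.

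Second I would invoke Theorem \ref{theorem 3.4}: for $i=j+m-1$ it identifies this $l$ explicitly as $l(x)=g^{-1}(x)=x^{r_{1}r_{2}}$ with $r_{1}\cdot 2^{m-1}\equiv 1$ and $r_{2}\cdot 2^{j-1}\equiv 1\pmod{2^{m}+1}$ (note $x\mapsto x^{r_{1}r_{2}}$ is a permutation of $U$, since $r_{1}$ and $r_{2}$, hence their product, are units modulo $|U|=2^{m}+1$, so it is genuinely a two-sided inverse of $g$ on $U$). Substituting $l(\beta(x))=x^{(2^{m}-1)r_{1}r_{2}}$ and $r'=r_{3}$ into the displayed formula yields exactly the asserted expression for $f^{-1}$, and the defining relations $f(f^{-1}(x))\equiv f^{-1}(f(x))\equiv x\pmod{x^{q}-x}$ then hold by the correctness of Lemma \ref{lemma 3.1}.

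Because Theorem \ref{theorem 3.4} and Lemma \ref{lemma 3.1} are already in hand, I do not expect any genuine obstacle: the one delicate computation — the exponent reduction, using $i=j+m-1$, that collapses $g(g^{-1}(x))$ to $x$ on $U$ — lives entirely inside the proof of Theorem \ref{theorem 3.4}. The only care needed at this stage is bookkeeping: keeping the negative exponents in $h$, $\alpha$, $\beta$ and $l$ interpreted consistently (modulo $q-1$ on $\mathbb{F}_{q}^{\times}$, modulo $2^{m}+1$ on $U$), handling $f(0)=0$ separately as usual, and choosing the exponents $q-s$, $s$, $r_{3}$, $r_{1}r_{2}$ in their stated residue ranges so that the composite is a polynomial agreeing with $f^{-1}$ modulo $x^{q}-x$.
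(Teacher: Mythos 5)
Your proposal is correct and follows exactly the route the paper takes: the paper deduces Theorem \ref{theorem 3.5} directly by plugging the inverse $g^{-1}(x)=x^{r_1r_2}$ from Theorem \ref{theorem 3.4} into the general formula of Lemma \ref{lemma 3.1} with $s=2^m-1$, $r=d_1$, $r'=r_3$, offering no further argument. Your extra remarks on the hypotheses (interpreting the negative exponent in $h$, the residue ranges, $f(0)=0$) are sensible bookkeeping beyond what the paper records, but do not change the approach.
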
	
	
\section{Conclusion}
In this paper, we have proposed a new method to construct permutation trinomials with coefficients 1. Moreover, we have given the explicit compositional inverses of a class of permutation trinomials for a special case.
	
By using fractional permutation polynomials over $U$, we reversely deduced some PPs over the original finite fields. This construction method (or reversion usage of Tu's method) by polar decomposition has not appeared previously, and may be employed to discover more PPs (especially PPs with coefficients 1 since there were no simple characterization of these PPs over finite fields in the literature by now).
	
\section*{Acknowledgment}	
This work was supported by Natural Science Foundation of Beijing Municipality (No. 4202037), NSF of China with contract (No.61972018).

\end{document}